\newtheorem{theorem}{Theorem}[section]
\newtheorem{proposition}[theorem]{Proposition}
\newtheorem{corollary}[theorem]{Corollary}
\theoremstyle{definition}
\newtheorem{definition}[theorem]{Definition}
\theoremstyle{remark}
\numberwithin{equation}{section}
\newcommand{\Pb}{\mathcal{P}}
\newcommand{\He}{\mathcal{H}}
\newcommand{\C}{\mathcal{\mathbf{C}}}
\newcommand{\R}{\mathcal{\mathbf{R}}}
\begin{document} 

\title{Strongly Cancellative and Recovering Sets On Lattices}
\author{ShinnYih Huang}
\author{Hoda Bidkhori}

\begin{abstract}
We use information theory to study recovering sets $\R_L$ and strongly cancellative sets $\C_L$ on different lattices. These sets are special classes of recovering pairs and cancellative sets previously discussed in [1], [3] and [5]. We mainly focus on the lattices $B_n$ and $D_{l}^{k}$. Specifically, we find upper bounds and constructions for the sets $\R_{B_n}$, $\C_{B_n}$, and $\C_{D_{l}^{k}}$.
\end{abstract}

\maketitle

\section{\textbf{Introduction}}
In this paper, we study the strongly cancellative sets $\C_{L}$ and recovering sets $\R_L$ which are subsets of points in lattices $L$, see Definition \ref{drecs} and \ref{dcans}. The study of strongly cancellative sets is motivated by the work of Frankl and Furedi [3] and Fredman [6] on cancellative sets. Specifically, strongly cancellative sets are a special class of cancellative sets. 

On the other hand, the study of the recovering sets is prompted by the previous work of Simonyi [1] on recovering pairs. A recovering pair $(A,B)$ is an ordered pair of subsets $A,B$ of points in a lattice such that for any $a,a' \in A$ and $b,b' \in B$, we have the follwing:
\begin{align*}
a \wedge b &= a' \wedge b'  \Rightarrow a = a', \\
a \vee b &= a' \vee b' \Rightarrow b = b'.
\end{align*}
The paper [7] of K\"orner and Olistky shows that the upper bound of $|A||B|$ plays an important role in the zero-error information theory. Simonyi gave an upper bound $3^n$ for the size of $|A||B|$ on the Boolean lattice, and Holzman and K\"orner improved the bound to $2.3264^n$ afterward. Through out this paper, we study a special class of the recovering pairs $(\R_{L},\R_{L})$ which has the same set $\R_{L}$. In this case, we call $\R_{L}$ a recovering set.

As we go through this paper, one can see in Definition \ref{drecs} and \ref{dcans} that recovering sets are also a special case of strongly cancellative sets. We focus on the upper bounds and structures of these two sets by using some results in Infomation Theory introduced by Holzman and K\"orner [4].

This paper is organized as follows: In Section 2, we go through the definitions of strongly cancellative sets and recovering sets and some results on the entropy function in information theory. In Section 3, we study the recovering set $\R_{B_n}$ on the Boolean lattice $B_n$ and give an upper bound $\left|\R_{B_n}\right| \leq \sqrt{3} \cdotp 2^{0.4392n}$. As a result, this class of the recovering pairs has an upper bound $3 \cdotp 2^{0.8784n} = 3 \cdotp (1.7992554)^n$ on its size. In Section 4, we study strongly cancellative sets $\C_{B_n}$ on $B_n$. We give a tight upper bound $2^{\lfloor \frac{n}{2} \rfloor}$ on $|\C_{B_n}|$ for this lattice. Finally, in section 5, we consider the strongly cancellative sets $\C_{D_{l_1,\ldots,l_k}}$ on the lattice $D_{l_1,\ldots,l_k}$ which is the product of $k$ chains of lengh $l_1-1 , \ldots, l_k-1$. We show that when $l_1=\cdots=l_k = l$, there exists a strongly cancellative set of size $l^{\left\lfloor \frac{k}{2} \right\rfloor }$ and $|\C_{D_{l,\ldots,l}}| \leq (2 l)^{\frac{k}{2}} + \frac{k(l-1)}{2} +1$.

\section{\textbf{Preliminaries}}
For basic definitions and results concerning lattices, we encourage readers to consult Chapter 3 of [10]. In particular, the Boolean lattice $B_n$ is the lattice  of all subsets of the set $\{1,\ldots,n \}$ ordered by inclusion, and $D_{l_1,\ldots,l_k}$ is the lattice formed by thr product of $k$ chains of lengh $l_1-1 , \ldots, l_k-1$, so that the points in $D_{l_1,\ldots,l_k}$ correspond to $k$-dimensional vectors $(v_1,\ldots,v_k)$ with $0\leq v_i\leq l_i-1$. The ordering of points in $D_{l_1,\ldots,l_k}$ is as follows:
$$v \preceq w \Leftrightarrow v_i \leq w_i\text{, for all }1\leq i\leq k.$$

A cancellative set is a subset of points in lattice $L$ that any three different points $v_1,v_2,v_3$ in this set satisfy the following condition:
$$v_1 \wedge v_2 \neq v_1 \wedge v_3.$$
We define strongly cancellative sets as a special class of cancellative sets.

\begin{definition}\label{dcans}
A \textit{strongly cancellative set} $\C_{L}$ of lattice $L$ is a subset of points in $L$ such that for any three different points $a_1,a_2,a_3 \in \C_{L}$, 
\begin{equation}\label{e6}
a_1 \wedge a_2 \neq a_1 \wedge  a_3 \text{ and } a_1 \vee a_2 \neq a_1 \vee  a_3.
\end{equation}
\end{definition}

A recovering set holds all all the conditions which define a strongly cancellative set. In addition, any recovering set $\R_{L}$ forms a recovering pair $( \R_{L}, \R_{L} )$ on $L$. As a result, the points in $\R_{L}$ satisfy the following conditions:

\begin{definition}\label{drecs}
A \textit{recovering set} $\R_{L}$ of lattice $L$ is a subset of points in $L$ such that for any four different points $a_1,a_2,a_3,a_4 \in \R_{L}$, we have
\begin{align}
a_1 \wedge a_2 \neq a_3 \wedge  a_4 \text{ and } a_1 \vee a_2 \neq a_3 \vee  a_4, \\
a_1 \wedge a_2 \neq a_1 \wedge  a_3 \text{ and } a_1 \vee a_2 \neq a_1 \vee  a_3.
\end{align}
\end{definition}

Now, we  introduce the \textit{entropy function} and show an inequality of it.

Given a discrete random variable $X$ with $m$ possible values $x_1,\ldots,x_m$, we define the \textit{entropy function} $\He$ of $X$ as follows:
\begin{equation}\label{eentropy}
\He(X) = - \sum_{i=1}^{m} p(x_i)\log_{b}{p(x_i)} = \sum_{i=1}^{m} p(x_i)\log_{b}\frac{1}{p(x_i)},
\end{equation}
where $p$ is the probability mass function of $X$ and $x_i$ is the value of $X$. 
In this paper, we always set $b = 2$. In this case, the function $x\log{\frac{1}{x}}$ is concave down when $x > 0 $. Therefore, for any $s$ values $0\leq p_1,\ldots,p_s \leq 1$, we have
\begin{equation}\label{ecommonb}
\sum_{j=1}^{s} \left(p_j\log{\frac{1}{p_j}} \right) \leq s\cdot \left(\frac{\sum_{j=1}^{s} p_j}{s}\right) \cdotp \log{\left(\frac{s}{\sum_{j=1}^{s}p_j} \right)}.
\end{equation} 

The following inequality of entropy functions is the mahor inequality throughout this paper. A proof of the inequality is given in [9].

\begin{theorem}\label{eentropyin}
If $\xi = (\xi_1,\ldots,\xi_m)$ is an $n$-dimensional random varialbe, then
\begin{equation}
\He(\xi) \leq \sum_{i=1}^{n} \He(\xi_i).
\end{equation}
\end{theorem}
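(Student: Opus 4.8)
The plan is to reduce the general inequality to the two-variable case $\He(\xi_1,\xi_2)\le \He(\xi_1)+\He(\xi_2)$ and then bootstrap to arbitrary $n$ by induction. First I would introduce the conditional entropy
\begin{equation*}
\He(Y\mid X) = \sum_{x} p(x)\sum_{y} p(y\mid x)\log\frac{1}{p(y\mid x)},\qquad p(y\mid x) = \frac{p(x,y)}{p(x)},
\end{equation*}
and verify the chain rule $\He(X,Y) = \He(X) + \He(Y\mid X)$ directly from \eqref{eentropy}: writing $p(x,y) = p(x)\,p(y\mid x)$ splits $\log\frac{1}{p(x,y)}$ into two terms, and $\sum_{x,y} p(x,y)\log\frac{1}{p(x)}$ collapses to $\He(X)$ because $\sum_{y} p(x,y) = p(x)$. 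Throughout I would adopt the usual convention $0\log\frac{1}{0}=0$, so that atoms of probability zero may be dropped from every sum.

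The crux is the bound $\He(Y\mid X)\le \He(Y)$ (``conditioning does not increase entropy''), and here I would lean on exactly the concavity already used for \eqref{ecommonb}: the functional $g(q_1,\dots,q_s) = \sum_{j=1}^{s} q_j\log\frac{1}{q_j}$ is concave on the probability simplex, being a sum of the concave functions $q\mapsto q\log\frac{1}{q}$. Since the law of $Y$ is the convex combination $p(\cdot) = \sum_x p(x)\,p(\cdot\mid x)$ of the conditional laws with weights $p(x)$ summing to $1$, Jensen's inequality for $g$ gives
\begin{equation*}
\He(Y) = g\!\left(\sum_x p(x)\,p(\cdot\mid x)\right)\ \ge\ \sum_x p(x)\,g\!\left(p(\cdot\mid x)\right) = \He(Y\mid X).
\end{equation*}
Combined with the chain rule this yields the two-variable statement $\He(\xi_1,\xi_2) = \He(\xi_1)+\He(\xi_2\mid\xi_1)\le \He(\xi_1)+\He(\xi_2)$.

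Finally I would induct on $n$: the case $n=1$ is trivial, $n=2$ is the step above, and for the inductive step I would regard $X=(\xi_1,\dots,\xi_{n-1})$ as a single random variable and $Y=\xi_n$, so that the two-variable bound gives $\He(\xi_1,\dots,\xi_n)\le \He(\xi_1,\dots,\xi_{n-1})+\He(\xi_n)$, after which the induction hypothesis bounds $\He(\xi_1,\dots,\xi_{n-1})$ by $\sum_{i=1}^{n-1}\He(\xi_i)$.

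I expect the only genuine obstacle to be the ``conditioning reduces entropy'' step; the chain rule and the induction are pure bookkeeping. An equivalent and arguably shorter route is to observe that $\He(X)+\He(Y)-\He(X,Y) = \sum_{x,y} p(x,y)\log\frac{p(x,y)}{p(x)p(y)}$ and that this is nonnegative by the log-sum (Gibbs) inequality applied to the distributions $p(x,y)$ and $p(x)p(y)$; since the result is classical, I would also simply point to [9] for the detailed computation, as the paper already does.
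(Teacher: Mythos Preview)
Your argument is correct and is the standard proof of the subadditivity of entropy: chain rule $\He(X,Y)=\He(X)+\He(Y\mid X)$, then $\He(Y\mid X)\le\He(Y)$ via concavity of $q\mapsto q\log\frac{1}{q}$ (equivalently, nonnegativity of mutual information via the log-sum inequality), then induction on $n$.

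There is nothing to compare against, however: the paper does not supply its own proof of this theorem. It simply states the inequality and remarks that ``A proof of the inequality is given in [9]'' (Csisz\'ar--K\"orner). So your proposal actually goes further than the paper does, and the approach you outline is exactly the one found in that reference. Your closing remark---that one could just cite [9]---is precisely what the paper opts to do.
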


\section{\textbf{Recovering Set on $B_n$}}
In this section, we study recovering sets on Boolean lattice $B_n$. Since we are considering the Boolean lattice, we use $\cap$ and $\cup$ instead of $\wedge$ and $\vee$. In the following theorem, we give an upper bound for  $\left|\R_{B_n}\right|$.

\begin{theorem}\label{tinrs}
For any recovering set $\R_{B_n}$, we have $|\R_{B_n}| \leq \sqrt{3}\cdotp 2^{0.4392n}$.
\end{theorem}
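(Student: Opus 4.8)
The plan is to encode a recovering set $\R_{B_n}$ as the joint distribution of a random pair and bound its entropy in two ways. First I would pick $a$ and $b$ independently and uniformly at random from $\R_{B_n}$, and consider the random variable $Z = (a \cap b, a \cup b)$. The recovering-set axioms in Definition \ref{drecs} say precisely that the maps $(a,b) \mapsto a\cap b$ and $(a,b)\mapsto a\cup b$ each determine one coordinate: from $a\cap b$ we recover $a$ (the second line of \eqref{drecs} forces $a=a'$ whenever $a\cap b = a'\cap b'$ provided $a,a',b,b'$ are suitably distinct; the degenerate coincidences among fewer than four points are handled by the strongly-cancellative conditions), and symmetrically $a\cup b$ recovers $b$. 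Hence $Z$ determines the pair $(a,b)$, so $\He(Z) = \He(a,b) = 2\log_2 |\R_{B_n}|$.

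Next I would bound $\He(Z)$ from above coordinatewise. Writing $a\cap b = (\xi_1,\dots,\xi_n)$ and $a\cup b = (\eta_1,\dots,\eta_n)$ as $0/1$-vectors, $Z$ is a $2n$-dimensional random variable, so Theorem \ref{eentropyin} gives $\He(Z) \le \sum_{i=1}^n \bigl(\He(\xi_i) + \He(\eta_i)\bigr)$. The key observation is that coordinate $i$ of the four-valued pair $(\xi_i,\eta_i) = \bigl((a\cap b)_i,(a\cup b)_i\bigr)$ can never equal $(1,0)$, since $a\cap b \subseteq a\cup b$; it takes only the three values $(0,0),(0,1),(1,1)$. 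So for each $i$ the contribution $\He(\xi_i)+\He(\eta_i) = \He(\xi_i,\eta_i)$ is the entropy of a distribution supported on $3$ points, which is at most $\log_2 3$, but one can do better. Let $p_i = \Pr[(a\cap b)_i = 1] = \Pr[a_i = b_i = 1] = q_i^2$ and $r_i = \Pr[(a\cup b)_i = 1] = 1 - \Pr[a_i = b_i = 0] = 1-(1-q_i)^2$, where $q_i = \Pr[a_i = 1]$; the three probabilities on $(0,0),(0,1),(1,1)$ are $(1-q_i)^2,\ 2q_i(1-q_i),\ q_i^2$. I would then maximize $h\bigl((1-q)^2, 2q(1-q), q^2\bigr)$ over $q\in[0,1]$, where $h$ is Shannon entropy of a $3$-point distribution; by symmetry the maximum is at $q = 1/2$, giving the value $\tfrac14\log_2 4 + \tfrac12\log_2 2 + \tfrac14\log_2 4 = \tfrac32$. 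Hmm — that yields $2\log_2|\R_{B_n}| \le \tfrac32 n$, i.e. $|\R_{B_n}| \le 2^{3n/4}$, which is weaker than the claimed $2^{0.4392n}$, so the straightforward bound is not enough.

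To get the sharper exponent $0.4392$ one must exploit both conditions of Definition \ref{drecs} simultaneously rather than using only the recovery of $a$ from $a\cap b$. The refinement I would pursue is to also track, for each coordinate, the joint law of $(a_i,b_i)$ against the "meet-recoverability" constraint: the first line of \eqref{drecs}, $a_1\cap a_2 \ne a_3\cap a_4$, says the map $(a,b)\mapsto a\cap b$ is injective on $\R_{B_n}\times\R_{B_n}$ (not merely injective in the first argument), and likewise for $\cup$. Thus $\He(a\cap b) = \He(a\cup b) = \He(a,b) = 2\log_2|\R_{B_n}|$ already — a single coordinate statement $\He(a\cap b) \le \sum_i \He(\xi_i) \le \sum_i h(q_i^2)$ gives $2\log_2|\R_{B_n}| \le \sum_i h(q_i^2) \le n\cdot h(1/4)$, and similarly $\le n\cdot h\bigl(1-(1-q_i)^2\bigr)$. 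Combining the meet- and join-bounds and optimizing the common value of $q_i$ (the meet-bound wants $q_i$ small, the join-bound wants $q_i$ large, so the worst case balances them) is what produces the constant: one solves for $q$ making $h(q^2) = h(1-(1-q)^2)$ — which by symmetry is again $q=1/2$ unless we use a weighted combination. The honest statement is that the exponent $0.4392$ comes from taking the \emph{better} of several such coordinatewise entropy inequalities and carrying out the numerical optimization; I expect the main obstacle to be setting up the right family of inequalities (in particular correctly handling the edge-case coincidences among $<4$ points via the strongly-cancellative hypothesis, and choosing which conditional entropies to expand) so that the optimization genuinely yields $\log_2(3)/2 + \text{(something)} \approx 0.4392$ rather than the crude $3/4$, together with verifying the factor $\sqrt3$ absorbs the lower-order terms.
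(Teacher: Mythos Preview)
Your entropy framework is the right one and matches the paper's, but two concrete points are off and one key step is missing.

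First, the map $(a,b)\mapsto a\cap b$ is \emph{not} injective on $\R_{B_n}\times\R_{B_n}$. Definition~\ref{drecs} only forbids $a_1\cap a_2=a_3\cap a_4$ when the four elements are pairwise distinct (and $a_1\cap a_2=a_1\cap a_3$ when the three are distinct). Nothing rules out $(a,b)$, $(b,a)$, and $(a,a)$ all giving the same intersection when $a\subsetneq b$. The paper's first step is exactly this case analysis, concluding that each value of $a\cap b$ arises from at most three ordered pairs; hence $\He(a\cap b)\ge \log\bigl(|\R_{B_n}|^2/3\bigr)$, not $2\log|\R_{B_n}|$. This is precisely where the $\sqrt{3}$ in the statement comes from, so your ``factor $\sqrt3$ absorbs the lower-order terms'' is not the right explanation.

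Second, the single-variable bound $\sum_i h(q_i^2)\le n\cdot h(1/4)$ is wrong: $h(q^2)$ is maximised at $q=1/\sqrt2$ where it equals $h(1/2)=1$, so the meet bound alone only gives $|\R_{B_n}|\le\sqrt3\cdot 2^{n/2}$.

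The combining step you are groping for is simpler than you suggest: the paper just \emph{averages} the meet and join inequalities. With $q_i=\Pb_{\R_{B_n}}(i)$ one has
\[
\log\frac{|\R_{B_n}|^2}{3}\ \le\ \frac12\sum_{i=1}^n\Bigl(h(q_i^2)+h\bigl((1-q_i)^2\bigr)\Bigr)\ \le\ \frac{n}{2}\,\max_{0\le q\le 1}\Bigl(h(q^2)+h\bigl((1-q)^2\bigr)\Bigr),
\]
and a short numerical case split (the paper uses the cut points $q=5/11,\,6/11$ and monotonicity of $h$ on $[0,1/2]$) bounds the maximum by $1.7565$, giving the exponent $1.7565/4\approx 0.4392$. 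No ``family of inequalities'' or conditional entropies are needed---just the two coordinatewise bounds you already wrote down, added together.
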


\begin{proof}
We define random variable $\xi = a_i \cap a_j$, where $a_i$ and $a_j$ are independently chosen according to the uniform distribution on $\R_{B_n}$. Clearly, there are $|\R_{B_n}|^2$ possible pairs $(a_i,a_j)$. We wish to show that for any value $a$ in $\xi$, there are at most three pairs $(a_i,a_j)$ such that $a= a_i \cap a_j$. Given a pair $(a_t, a_s)$, we have the following two cases:

\begin{enumerate}
\item $a_t \neq a_s$. Suppose that there exists another pair $(a_{t_1},a_{s_1})$, where $a_{t_1}\cap a_{s_1} = a_t\cap a_s = a_s\cap a_t$. By Definition \ref{drecs}, $a_{t_1}$ and $a_{s_1}$ should be the same element in $B_n$, and we have the following possible cases:

\begin{enumerate}
\item $a_{t_1} = a_{s_1} \notin \{a_t,a_s\}$. 

In this case, since $a_t \cap a_s = a_{t_1} \cap a_{t_1}$, $a_{t_1}$ is contained in $a_t$ and $a_s$. Therefore, $a_{t_1} \cap a_t = a_{t_1} \cap a_s$ which contradicts the second requirement of Definition \ref{drecs}. Therefore, this case is not possible.
\item $a_{t_1} = a_{s_1} \in \{a_t,a_s\}$.

Since $a_{t_1} = a_t\cap a_s$ and $a_t \neq a_s$, it is easy to see that $(a_t,a_s)$ is either equal to $(a_i,a_i)$ or $(a_j,a_j)$.
\end{enumerate}
(a) and (b) imply that when $a_t \neq a_s$, there is at most one additional pair $(a_{t_1},a_{s_1})$ that $a_{t_1} \cap a_{s_1} = a_t \cap a_s = a_s \cap a_t$, and so at most these three pairs.

\item $a_t = a_s$. One can easily see that this is the same condision as case (b) in (1). That is to say, $a_t$ is either $a_{t_1}$ or $a_{s_1}$, and $(a_{t_1},a_{s_1})$ has only two possible choices.
\end{enumerate}

Consequently, there are at most three pairs have the same intersection value in $\xi$. Using this property, we give a lower bound on the entropy function of $\xi$.

For any $a$ in $\xi$, let $\mathbf{C}(a) = \{(a_i,a_j) : a_i\cap a_j = a\text{, and }a_i,a_j \in \R_{B_n} \}$. The probability that $\xi = a$ is $\Pb_a = \frac{|\mathbf{C}(a)|}{|\R_{B_n}|^2}$. By the above arguemnt, $\Pb_a \leq \frac{3}{|\R_{B_n}|^2}$. Considering the entropy function defined in \eqref{eentropy}, we obtain the following inequality:
$$
\mathcal{H}(\xi) = \sum_{a\in \xi}{\Pb_a\log{\frac{1}{\Pb_a}}} \geq \sum_{a\in \xi}{\Pb_a\log{\frac{|\R_{B_n}|^2}{3}}} = \log{\frac{|\R_{B_n}|^2}{3}}.
$$

On the other hand, $\xi$ is an $n$-dimensional random variable ($\xi_1,\ldots,\xi_n$), where 
$$
\xi_t = 
\begin{cases} 
  1,  & t \in a_i\cap a_j. \\
  0, & t \notin a_i\cap a_j. 
\end{cases}
$$

We set $\R_{B_n}(t) = \{a_i \mid a_i \in \R_{B_n}\text{, }t\in a_i\}$ and $\Pb_{\R_{B_n}}(t) = \frac{|\R_{B_n}(t)|}{|\R_{B_n}|}$, for any $1 \leq t \leq n$. Therefore, for any $t \in \{1,\ldots,n \}$, the probability that $\xi_t = 1$ is $\left(\Pb_{\R_{B_n}}(t)\right)^2$. Let us denote the function $h(x)$ as $x\log{\frac{1}{x}} + (1-x)\log{\frac{1}{1-x}}$. We here by Theorem \ref{eentropyin} that
\begin{equation}\label{ierecs1}
\log{\frac{|\R_{B_n}|^2}{3}} \leq \mathcal{H}(\xi) \leq \sum_{t = 1}^{n} \mathcal{H}(\xi_{t}) = \sum_{t = 1}^{n}\left[h\left(\Pb_{\R_{B_n}}(t)^2\right)\right],
\end{equation}

By considering the random variable $\xi^{'} = a_i \cup a_j$, we similarly get 
\begin{equation}\label{ierecs2}
\log{\frac{|\R_{B_n}|^2}{3}} \leq  \sum_{t = 1}^{n}h\left(\left(1-\left(\Pb_{\R_{B_n}}(t)\right)\right)^2\right).
\end{equation}

We  average over \eqref{ierecs1} and \eqref{ierecs2} to obtain a better upper bound for $\log{\frac{|\R_{B_n}|^2}{3}}$, namely:
\begin{align}
\log{\frac{|\R_{B_n}|^2}{3}} &\leq \frac{1}{2} \sum_{t = 1}^{n}\left[h\left(\Pb_{\R_{B_n}}(t)^2\right) + h\left(\left(1-\Pb_{\R_{B_n}}(t)\right)^2\right)\right] \\
&\leq \frac{n}{2}\left[\max_{0\leq x \leq 1}{\left(h(x^2) + h\left((1-x)^2\right)\right)}\right].
\end{align}

Furthermore, since $h'(x) = \log{\frac{1-x}{x}}$ is positive when $x \leq \frac{1}{2}$, and negative when $x > \frac{1}{2}$. Hence, $h(x)$ increases in the interval $\left[0,\frac{1}{2}  \right]$ and decreases in the interval $\left[\frac{1}{2},1  \right]$. We have the following two cases:

\begin{enumerate}

	\item $x \geq \frac{6}{11}$ or $x \leq \frac{5}{11}$. Then $h(x^2) + h((1-x)^2) \leq 1 + h(\frac{25}{121}) \leq 1.7349558$.

	\item $\frac{5}{11} \leq x \leq \frac{6}{11}$. Then $h(x^2) + h((1-x)^2) \leq 2h(\frac{36}{121}) \leq 1.7564781$.
\end{enumerate}

By combining (1) and (2), we have 
$$\log{\frac{|\R_{B_n}|}{\sqrt{3}}} \leq \frac{n}{4}\left[\max_{0\leq x \leq 1}{\left(h(x^2) + h\left((1-x)^2\right)\right)}\right] \leq 0.4392n.$$

Therefore, 
$$|\R_{B_n}| \leq \sqrt{3} \cdotp 2^{0.4392n}.$$

\end{proof}

One can see that Theorem \ref{tinrs} gives an upper bound $\left(\sqrt{3} \cdotp 2^{0.4392n} \right)^2 = 3 \cdotp 2 ^ {0.8784n}$ for $|\R_{B_n}|^2$ concerning the special class $(\R_{B_n},\R_{B_n})$ of recovering pairs on the Boolean lattice. This result shows a significant improvement of the cardinality of general recovering pairs discussed in [1], [4], and [5].

\section{\textbf{Strongly Cancellative set on $B_n$}}
Strongly cancellative sets $\C_{B_n}$ are defined in section 2. In this section, we show that the maximal size of $\C_{B_n}$ on $B_n$ is $2^{\lfloor \frac{n}{2} \rfloor}$. 

\begin{theorem}\label{tconcans}
There exists a strongly cancellative set $\C_{B_n}$ of size $2^{\lfloor \frac{n}{2} \rfloor}$ on $B_n$.
\end{theorem}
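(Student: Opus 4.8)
The plan is to exhibit an explicit construction. Write $n = 2m + r$ with $r \in \{0,1\}$, so $\lfloor n/2 \rfloor = m$, and partition (most of) the ground set $\{1,\dots,n\}$ into $m$ disjoint pairs $P_1,\dots,P_m$, say $P_i = \{2i-1, 2i\}$, leaving element $n$ unused if $r=1$. For each of the $2^m$ functions $\varepsilon \colon \{1,\dots,m\} \to \{1,2\}$ I would define a set $A_\varepsilon \subseteq \{1,\dots,n\}$ by picking exactly one element from each pair: $A_\varepsilon = \{\,p_{i,\varepsilon(i)} : 1 \le i \le m\,\}$ where $P_i = \{p_{i,1}, p_{i,2}\}$. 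Then $\C_{B_n} = \{A_\varepsilon : \varepsilon\}$ has $2^m = 2^{\lfloor n/2\rfloor}$ elements, all of size $m$, so they form an antichain. The content is to check the strongly cancellative conditions \eqref{e6}.

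Here is why the construction works. Fix three distinct sets $A = A_\alpha$, $B = A_\beta$, $C = A_\gamma$. I claim $A \cap B \neq A \cap C$; the join condition follows by an identical argument on complements (within the union of the pairs), or directly. Since $B \neq C$ there is an index $i$ with $\beta(i) \neq \gamma(i)$; then on the pair $P_i$, the sets $B$ and $C$ pick different elements, so exactly one of $p_{i,1}, p_{i,2}$ lies in $B$ and the other lies in $C$. Now look at which element of $P_i$ the set $A$ picks, say $A \cap P_i = \{p_{i,\alpha(i)}\}$. Exactly one of $B$, $C$ agrees with $A$ on this pair and the other disagrees; hence $A \cap B$ and $A \cap C$ differ in their intersection with $P_i$ (one of them contains $p_{i,\alpha(i)}$, the other does not), so $A \cap B \neq A \cap C$. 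This establishes the first inequality in \eqref{e6}, and applying the same reasoning to complements — note $A \cup B = \overline{\,\overline A \cap \overline B\,}$ and within each pair $P_i$ the complement of a one-element subset is again a one-element subset — gives $A \vee B \neq A \vee C$.

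The only thing to be careful about is the trivial case $m = 0$ (i.e. $n \le 1$), where $2^{\lfloor n/2 \rfloor} = 1$ and a single point is vacuously a strongly cancellative set, so there is no obstacle there. There is essentially no hard step here: the whole argument is the observation that restricting to a single coordinate-pair already distinguishes the two intersections, so I would present the construction, verify the cardinality and the antichain property in one line, and then give the two-line coordinate argument above. The substantive work of Section 4 — showing this size is optimal, i.e. $|\C_{B_n}| \le 2^{\lfloor n/2 \rfloor}$ — is the matching upper bound, which is a separate statement and presumably handled via the entropy inequality of Theorem~\ref{eentropyin} as in Theorem~\ref{tinrs}.
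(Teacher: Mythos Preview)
Your construction and verification are essentially identical to the paper's: both pick one element from each of the $\lfloor n/2\rfloor$ pairs $\{2i-1,2i\}$ and check the strongly cancellative condition by examining a single pair on which $B$ and $C$ disagree. One small correction to your closing remark: the matching upper bound (Theorem~\ref{tiecans}) is actually proved by a direct counting argument, not via the entropy inequality.
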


\begin{proof}
We construct $\C_{B_n}$ as follows. First, let us divide the set $\left\{1,\ldots ,2\lfloor\frac{n}{2}\rfloor \right\}$ into $\lfloor\frac{n}{2}\rfloor$ blocks $S_i = \{2i -1,2i \}$, for $1 \leq i \leq \lfloor\frac{n}{2}\rfloor$. We define $\C_{B_n}$ to be the family of all the subsets $s = \left\{ s_1,\ldots,s_{\lfloor\frac{n}{2}\rfloor} \right\}$ such that $s_i \in S_i,\text{ for } 1 \leq i \leq \lfloor\frac{n}{2}\rfloor$. It is easy to see that $|\C_{B_n}| =2^{\lfloor\frac{n}{2}\rfloor}$. Now, we show that $\C_{B_n}$ satisfies the conditions defining strongly cancellative set. 

Consider different elements $b = \left\{b_1,\ldots,b_{\lfloor\frac{n}{2}\rfloor}\right\}$ and $c = \left\{c_1,\ldots,c_{\lfloor\frac{n}{2}\rfloor}\right\}$ in $\C_{B_n}$, so that there exists some $1 \leq k \leq \lfloor\frac{n}{2}\rfloor$ such that $b_k \neq c_k$. Without lost of generality, assume that $b_k = 2k-1$ and $c_k = 2k$. Consequently, for any element $a = \left\{a_1,\ldots,a_{\lfloor\frac{n}{2}\rfloor}\right\}$, we have the following properties:

\begin{enumerate}
	\item $b_k \notin a \cap c$ and $c_k \notin a\cap b$,
	\item $b_k \in a \cup b$ and $c_k \in a \cup c$,
 	\item $a_k = b_k$ or $a_k = c_k$,
	\item $b_k \in a \cap b$ or $c_k \in a \cap c$,
	\item $c_k \notin a \cup b$ or $b_k \notin a \cup c$.
\end{enumerate}

Clearly, property (3) implies (4) and (5). Moreover, (1) and (4) imply that $a \cap b \neq a \cap c$, and similarly, (2) and (5) imply that $a \cup b \neq a \cup c$. Therefore, $\C_{B_n}$ is a strongly cancellative set.
\end{proof}

Now, we show that $\left|\C_{B_n}\right| \leq 2^{\left\lfloor \frac{n}{2} \right\rfloor}$.

\begin{theorem}\label{tiecans}
For any strongly cancellative $\C_{B_n}$ on $B_n$, we have $|\C_{B_n}| \leq 2^{\left\lfloor \frac{n}{2} \right\rfloor}$.
\end{theorem}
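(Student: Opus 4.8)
The plan is to mirror the entropy argument from Theorem \ref{tinrs}, but using the stronger structural constraint available for strongly cancellative sets. Concretely, I would set $\xi = a_i \cap a_j$ where $a_i, a_j$ are chosen independently and uniformly from $\C_{B_n}$, and first establish the combinatorial fact that for each value $a$ of $\xi$, the number of pairs $(a_i,a_j)$ with $a_i \cap a_j = a$ is small — in fact I expect it to be at most $2|\C_{B_n}| - 1$ rather than a constant. The reason: if $a_i \cap a_j = a_k \cap a_l$ for two \emph{disjoint} pairs, Definition \ref{dcans} does not immediately forbid this (that is a recovering-set condition, not a cancellative one), so one cannot bound the fiber by a constant. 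Instead, the cancellative condition $a_1 \wedge a_2 \neq a_1 \wedge a_3$ says: for a \emph{fixed} first coordinate $a_i$, the map $a_j \mapsto a_i \cap a_j$ is injective on $\C_{B_n} \setminus \{a_i\}$ — and also $a_i \cap a_i = a_i$ while $a_i \cap a_j \subsetneq a_i$ for $j \neq i$ in general is not guaranteed, but at any rate, for each fixed $a_i$ there are at most $|\C_{B_n}|$ values of $a_i \cap a_j$. Hence each fiber $\mathbf{C}(a)$ meets each "row" $\{(a_i, \cdot)\}$ at most once, giving $|\mathbf{C}(a)| \leq |\C_{B_n}|$ — actually I'd want to squeeze a factor of $2$ using both $\wedge$ and $\vee$ simultaneously, but let me first see how far the one-sided bound goes.

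With $\Pb_a \leq |\C_{B_n}|/|\C_{B_n}|^2 = 1/|\C_{B_n}|$, the same manipulation as before gives $\He(\xi) \geq \log |\C_{B_n}|$. Then I decompose $\xi = (\xi_1,\ldots,\xi_n)$ coordinatewise exactly as in Theorem \ref{tinrs}, apply Theorem \ref{eentropyin}, and get $\log|\C_{B_n}| \leq \sum_{t=1}^n h\bigl(\Pb_{\C_{B_n}}(t)^2\bigr)$, and dually (via $\xi' = a_i \cup a_j$) $\log|\C_{B_n}| \leq \sum_{t=1}^n h\bigl((1-\Pb_{\C_{B_n}}(t))^2\bigr)$. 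Averaging and maximizing $h(x^2) + h((1-x)^2)$ over $x \in [0,1]$ would give $\log |\C_{B_n}| \leq \frac{n}{2}\cdot(\text{something} \approx 0.878)$, which is \emph{too weak} — it only recovers roughly $2^{0.44 n}$, not $2^{\lfloor n/2\rfloor}$. So the naive entropy bound does not suffice, and the key must be to exploit the cancellative condition on \emph{both} the meet and the join at the same time to cut the fiber size down to a constant, or better, to pass to a cleverer random variable.

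The better approach, I expect, is to pair up the coordinates. Partition $\{1,\ldots,2\lfloor n/2\rfloor\}$ into blocks $S_i = \{2i-1, 2i\}$ and consider the random variable $\eta = (\eta_1, \ldots, \eta_{\lfloor n/2 \rfloor})$ where $\eta_i$ records the pair $(a_k \cap a_l \cap S_i,\ a_k \cup a_l \cap S_i)$ — i.e. which elements of the $i$-th block lie in the meet and which lie in the join of a uniformly random ordered pair $(a_k, a_l)$. The strongly cancellative condition, applied blockwise, should force that the full profile $\bigl(a_k \cap a_l,\ a_k \cup a_l\bigr)$ determines the unordered pair $\{a_k, a_l\}$ up to boundedly many choices — because knowing meet and join on every block pins down, for each block, enough information to reconstruct. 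Then $\He$ of this combined meet-join variable is at least $\log(|\C_{B_n}|^2 / c)$ for a constant $c$, while $\eta$ has at most $\lfloor n/2\rfloor$ coordinates each taking at most some bounded number of values, and more importantly each $\He(\eta_i) \leq \log(\text{small constant})$; summing and applying Theorem \ref{eentropyin} yields $2\log|\C_{B_n}| \leq \lfloor n/2\rfloor \cdot C + O(1)$. The main obstacle — and the step I would spend the most care on — is pinning down exactly the right bound on $\He(\eta_i)$ per block so that the constant $C$ comes out to exactly $2$, matching the construction in Theorem \ref{tconcans}; this is where the strongly cancellative hypothesis (as opposed to merely cancellative) must be used in full, presumably by arguing that within a single block the joint meet-and-join data behaves like a near-bijection onto a 3- or 4-element space but with the right entropy contribution of exactly $\log 2$ after accounting for the two-sided constraint. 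The dependence on the parity of $n$ (the floor) should drop out naturally from the block decomposition, with the leftover coordinate when $n$ is odd contributing nothing.
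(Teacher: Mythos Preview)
Your entropy approach is the wrong tool here, and the specific combinatorial claim underpinning your second attempt is false. You assert that the strongly cancellative condition should force the pair $(a_k\cap a_l,\ a_k\cup a_l)$ to determine $\{a_k,a_l\}$ up to a bounded constant. But look at the extremal construction from Theorem~\ref{tconcans} with $n=2m$: for \emph{every} ordered pair $(a,b)$ with $a$ and $b$ differing on every block $S_i$, one has $a\cap b=\emptyset$ and $a\cup b=\{1,\dots,n\}$. There are $2^m=|\C_{B_n}|$ such ordered pairs, so the fiber over $(\emptyset,[n])$ has size $|\C_{B_n}|$, not a constant. Consequently the lower bound on $\He(\xi,\xi')$ collapses back to $\log|\C_{B_n}|$, exactly the one-sided bound you already identified as too weak. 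The block decomposition of $\eta$ does not rescue this: an \emph{arbitrary} strongly cancellative set has no reason to respect a fixed pairing of coordinates, and in any case $\He(\eta_i)$ can be as large as $\log 9$, not $\log 2$.

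The paper's proof is elementary and uses no entropy at all --- and in fact you wrote down the key step yourself without recognizing it. You observed that for fixed $a_i$, the map $a_j\mapsto a_i\cap a_j$ is injective on $\C_{B_n}\setminus\{a_i\}$. The missing move is simply to note that every value of this map is a \emph{subset of $a_i$}, so injectivity immediately yields $|\C_{B_n}|-1\le 2^{|a_i|}$. Dually, using $\vee$, one gets $|\C_{B_n}|-1\le 2^{n-|a_i|}$. Taking the minimum over the two gives $|\C_{B_n}|\le 2^{\lfloor n/2\rfloor}+1$, and a short case analysis (on the parity of $n$ and the ranks that would be forced if equality held throughout) rules out the $+1$. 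No averaging, no optimization over $x\in[0,1]$, and the floor appears for the honest reason that $\min(2^r,2^{n-r})$ is maximized at $r=\lfloor n/2\rfloor$.
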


\begin{proof}
Fix an element $v' \in \C_{B_n}$. We consider the following sets:
\begin{align*}
\C_{1}(v') &= \{ v \cap v': v \neq v',v\in \C_{B_n} \}, \\
\C_{2}(v') &= \{ v \cup v': v \neq v',v\in \C_{B_n} \}.
\end{align*}
By Equation \eqref{e6}, we have $\left|\C_{1}(v')\right| = \left|\C_{2}(v')\right| = |\C_{B_n}| -1$. This implies that
\begin{equation}|\C_{B_n}| \leq 1 + \min\left(\left|\{ v : v \subseteq v' \}|,|\{ v : v \supseteq v' \}\right|\right).\end{equation}
Moreover, it is not hard to show that 
\begin{equation}\label{e7}
\min\left(\left|\{ v : v \subseteq v' \}\right|,\left|\{ v : v \supseteq v' \}\right|\right) \leq \left|\left\{ v : v \subseteq v^{*}, \text{rank}(v^{*}) = \left\lfloor\frac{n}{2} \right\rfloor\right\}\right|.
\end{equation}
We consider the following two cases:
\begin{enumerate}
\item $2 \mid n$. Then we have $\text{rank}(v') = \left\lfloor \frac{n}{2} \right\rfloor$ if the equality holds in (4.2). Suppose that the equalities in (4.1) and (4.2) hold for every $v' \in \C_{B_n}$. Consequently, $\text{rank}(v') = \left\lfloor \frac{n}{2} \right\rfloor$, for every $v' \in \C_{B_n}$, which implies that any two elements in the set are incomparable. One can easily see that, $\C_{1}(v') \neq \left|\{ v : v \subseteq v' \}\right|$ and $\C_{2}(v') \neq \left|\{ v : v \supseteq v' \}\right|$. Therefore, the equalities in (4.1) and (4.2) can not hold at the same time.
\item $2 \nmid n$. Then $\text{rank}(v') = \left\lfloor \frac{n}{2} \right\rfloor$ or $ \left\lfloor \frac{n+1}{2} \right\rfloor$ if the equality holds in (4.2). Suppose that the equalities in  (4.1) and (4.2) holds for every $v' \in \C_{B_n}$. Pick some element $w \in \C_{B_n}$. If $\text{rank}(w) = \left\lfloor \frac{n}{2} \right\rfloor$, then by (4.1) there exists another two elements $w'$ and $w''$ in the set such that $w \cap w' = w$ and $w\cap w'' = \emptyset$. This implies that $\text{rank}(w') = \left\lfloor \frac{n+1}{2} \right\rfloor$ and $w' \backslash w = \{x \}$, where $1\leq x\leq n$. 

By Equation \eqref{e6}, we have $\emptyset = w\cap w'' \neq w'\cap w''$, and thus $x \in w''$. This means that $w\cup w'' = w'\cup w''$ which is not possible. As a result, the equalities in (4.1) and (4.2) cannot hold at the same time. Similarly, one can prove the same statement when $\text{rank}(w) = \left\lfloor \frac{n+1}{2} \right\rfloor$.
\end{enumerate}

Finally, from (1) and (2), we have 
$$|\C_{B_n}| \leq \left|\left\{ v : v \subseteq v^{*}, \text{rank}(v^{*}) = \left\lfloor\frac{n}{2} \right\rfloor\right\}\right| = 2^{\left\lfloor \frac{n}{2} \right\rfloor}.$$
\end{proof} 
 
\section{\textbf{Strongly Cancellative Sets on $D_{l_1,\ldots,l_k}$ and $D_{l}^{k}$}}
For the definition of $D_{l_1,\ldots,l_k}$, see section 2. In particular, we say that $D_{l}^{k}$ is the lattice of $k$ chains of length $l-1$. Clearly, $D_{2}^{n}$ is  Boolean lattice $B_n$, and it is easy to show that for any two points $v = (v_1,\ldots,v_k)$ and $v' = (v'_1,\ldots,v'_k)$ in $D_{l_1,\ldots,l_k}$,
\begin{align*}
 (v_1,\ldots,v_k) \wedge  (v'_1,\ldots,v'_k) &= \big{(}\min{(v_1,v'_1)},\ldots,\min{(v_k,v'_k)}\big{)}, \\ 
 (v_1,\ldots,v_k) \vee  (v'_1,\ldots,v'_k) &= \big{(}\max{(v_1,v'_1)},\ldots,\max{(v_k,v'_k)}\big{)}  . 
\end{align*}

In the following proposition, we give a tight bound for the size of strongly cancellative sets on $D_{l_1,l_2}$.

\begin{proposition}\label{ptbcans}
Let $\C_{D_{l_1,l_2}}$ be a strongly cancallative set on $D_{l_1,l_2}$. Then $$\left|\C_{D_{l_1,l_2}}\right| \leq \min(l_1,l_2).$$
\end{proposition}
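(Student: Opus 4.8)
The plan is to imitate the argument in Theorem \ref{tiecans} but exploit the two-dimensional structure of $D_{l_1,l_2}$ directly, since here the ``middle layer'' bound is not the right comparison. Without loss of generality assume $l_1 \leq l_2$. Fix an element $v' = (v'_1, v'_2) \in \C_{D_{l_1,l_2}}$ and, as before, consider the two sets $\C_1(v') = \{ v \wedge v' : v \neq v', v \in \C_{D_{l_1,l_2}} \}$ and $\C_2(v') = \{ v \vee v' : v \neq v', v \in \C_{D_{l_1,l_2}} \}$. By Equation \eqref{e6} (the strongly cancellative condition, which forces the maps $v \mapsto v \wedge v'$ and $v \mapsto v \vee v'$ to be injective on $\C_{D_{l_1,l_2}} \setminus \{v'\}$), we get $|\C_1(v')| = |\C_2(v')| = |\C_{D_{l_1,l_2}}| - 1$. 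Now $\C_1(v')$ is contained in the set of points $\preceq v'$, i.e. the sublattice $\{0,\ldots,v'_1\} \times \{0,\ldots,v'_2\}$, which has $(v'_1+1)(v'_2+1)$ elements; similarly $\C_2(v')$ is contained in the ``up-set'' of $v'$, which has $(l_1 - v'_1)(l_2 - v'_2)$ elements. Hence
\begin{equation}\label{e:prodbound}
|\C_{D_{l_1,l_2}}| \leq 1 + \min\bigl( (v'_1+1)(v'_2+1),\ (l_1-v'_1)(l_2-v'_2) \bigr).
\end{equation}

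The crude bound \eqref{e:prodbound} is not yet $\min(l_1,l_2)$, so the main work is to sharpen it by a parity/extremality argument in the spirit of the two cases in Theorem \ref{tiecans}. First I would observe that, since $v'$ was arbitrary, if $\C_{D_{l_1,l_2}}$ had size larger than $\min(l_1,l_2) = l_1$ then the minimum in \eqref{e:prodbound} would have to be at least $l_1$ for \emph{every} $v' \in \C_{D_{l_1,l_2}}$; in particular both $(v'_1+1)(v'_2+1) \geq l_1$ and $(l_1-v'_1)(l_2-v'_2) \geq l_1$ would have to hold simultaneously for each element of the set. The key point is that the down-set and up-set of $v'$ are themselves products of two chains, and the real content of the strongly cancellative condition is that $\C_1(v')$ cannot be \emph{all} of the down-set while $\C_2(v')$ is simultaneously \emph{all} of the up-set of the same $v'$ (this is exactly what the ``equalities cannot hold at the same time'' step accomplishes in Theorem \ref{tiecans}: if $v \wedge v' = v$ for some $v \prec v'$ and $v \vee v' = v''$ for some $v'' \succ v'$ that is an atom-step above $v'$ in one coordinate, one derives a collision $v \vee v'' = v' \vee v''$ or $v \wedge v'' = v' \wedge v''$). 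Carrying this out in the $D_{l_1,l_2}$ setting, one shows that for at least one $v'$ one gets a strict inequality somewhere, and then a short optimization over the integer points $(v'_1, v'_2)$ of the region where both products are $\geq l_1$ forces $|\C_{D_{l_1,l_2}}| \leq l_1$.

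An alternative and possibly cleaner route, which I would try in parallel, is a projection argument: map each $v = (v_1, v_2) \in \C_{D_{l_1,l_2}}$ to its first coordinate $v_1 \in \{0, \ldots, l_1 - 1\}$. If two distinct elements $v, w$ of the set had $v_1 = w_1$, then for a third element $a = (a_1, a_2)$ with, say, $a_2$ between $v_2$ and $w_2$ one can arrange $a \wedge v$ and $a \wedge w$ to agree (their first coordinates are equal, and their second coordinates are both $\min$ against the middle value), contradicting \eqref{e6} — so the first-coordinate map is injective and $|\C_{D_{l_1,l_2}}| \leq l_1$. The technical care needed here is handling the case where no such ``middle'' third element exists, i.e. when $|\C_{D_{l_1,l_2}}|$ is small, but in that regime the bound $\min(l_1,l_2)$ is trivially true. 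I expect the main obstacle in either approach to be the same as in Theorem \ref{tiecans}: pinning down exactly which configurations of $\wedge$- and $\vee$-values are simultaneously achievable, and showing the extremal case cannot saturate both the down-set and up-set counts at once; the chain-product structure makes the bookkeeping heavier than in the Boolean case, but no genuinely new idea beyond \eqref{e6} and the injectivity it yields should be required.
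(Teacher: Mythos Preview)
Your second route---projection to the first coordinate---is the paper's approach, but the one concrete step you write down is wrong. If $v_1=w_1$ and $v_2<w_2$, and $a$ is a third element with $v_2<a_2<w_2$, then
\[
(a\wedge v)_2=\min(a_2,v_2)=v_2\quad\text{while}\quad (a\wedge w)_2=\min(a_2,w_2)=a_2,
\]
so $a\wedge v\neq a\wedge w$: the meets do \emph{not} collapse when $a_2$ is in the middle. What the paper actually proves is that once $v_1=w_1$ (obtained by pigeonhole from $|\C|>l_1$), \emph{every} third element $v^*$ forces a violation of \eqref{e6}, via a four-way case split: if $v^*_2\le v_2$ then $v^*\wedge v=v^*\wedge w$; if $v^*_2\ge w_2$ then $v^*\vee v=v^*\vee w$; and if $v_2\le v^*_2\le w_2$ one further splits on $v^*_1\lessgtr v_1$ to get either $v^*\vee w=v\vee w$ or $v^*\wedge v=w\wedge v$. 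So the worry you flag (``no such middle third element exists'') never arises; the gap is rather that the middle case needs a different pair of elements in the collision and a subcase on the first coordinate.

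Your first route, imitating Theorem~\ref{tiecans}, does not reach the target. The bound \eqref{e:prodbound} gives at best
\[
|\C_{D_{l_1,l_2}}|\le 1+\min_{v'}\max\bigl((v'_1{+}1)(v'_2{+}1),\ (l_1{-}v'_1)(l_2{-}v'_2)\bigr),
\]
and for $l_1=l_2=l$ with $v'$ near the center this is of order $l^2/4$, not $l$. The ``equalities cannot hold simultaneously'' trick from Theorem~\ref{tiecans} only shaves off a constant; it cannot close an order-of-magnitude gap. Drop this line and complete the projection argument with the four-case analysis above.
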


\begin{proof}
Without lost of generality, we assume that $l_1 \leq l_2$. Every point $v$ in $D_{l_1,l_2}$ is a vector $(v_1,v_2)$, where $0 \leq v_1 \leq l_1 -1$ and  $0 \leq v_2 \leq l_2 - 1$. We proceed by contradiction.

Suppose that $\left|\C_{D_{l_1,l_2}}\right| > l_1$. Then there exists two points $v = (v_1,v_2)$ and $w = (w_1,w_2)$ such that $v_1 = w_1$ and $v_2 < w_2$. For any point $v^* = (v^*_1,v^*_2) \notin \{v, w\}$, all the following four possible cases lead to contradiction:

\begin{enumerate}
	\item $v^*_2 \leq v_2$ implies that $v^* \wedge v = v^* \wedge w$.
	\item $v^*_2 > w_2$ implies that $v^* \vee v = v^* \vee w$.
	\item $v_2 \leq v^*_2 \leq w_2$ and $v^*_1 \leq v_1$ imply that $v^* \vee w = v \vee w$.
	\item $v_2 \leq v^*_2 \leq w_2$ and $v^*_1 \geq v_1$ imply that $v^* \wedge v = v \wedge w$.
\end{enumerate}

Therefore, we must have $\left|\C_{D_{l_1,l_2}}\right| \leq l_1 = \min(l_1,l_2)$, as desired.
\end{proof}
The bound $\min(l_1,l_2)$ is tight for $\left|\C_{D_{l_1,l_2}}\right|$. In particular, it is not hard to show that the following set is a strongly cancellative set of size $\min(l_1,l_2)$:
$$\C_{D_{l_1,l_2}} = \{ (x,y) \mid x+y = \min(l_1,l_2)-1 \}.$$

In the following, we study the size of the strongly cancellative sets on $D_l^k$.

\begin{proposition}\label{pconcans}
Suppose that $\C_{k_1}$ is a strongly cancellative set on $D_l^{k_1}$ for some small $k_1$, and any two elements in $\C_{k_1}$ are incomparable. Then, for any $k$ with $\left\lfloor\frac{k}{k_1}\right\rfloor= s$, there is a strongly cancellative set $\C_k$ of size $|\C_{k_1}|^s$ on $D_l^{k}$.
\end{proposition}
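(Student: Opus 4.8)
The plan is to realize $\C_k$ as a ``block product'' of copies of the given set $\C_{k_1}$. First I would partition the coordinate set $\{1,\dots,k\}$ into $s=\lfloor k/k_1\rfloor$ consecutive blocks $B_1,\dots,B_s$ of size $k_1$ each, together with a (possibly empty) remainder block $R$ with $|R|=k-sk_1\in\{0,\dots,k_1-1\}$. Identifying each $B_j$ with $\{1,\dots,k_1\}$ in the obvious way, I would set
\[ \C_k=\Big\{\, v\in D_l^{k}\ :\ v|_{B_j}\in\C_{k_1}\text{ for }1\le j\le s,\ \ v|_{R}=\mathbf{0}\,\Big\}. \]
Since the choices on distinct blocks are independent and the entries on $R$ are fixed, $|\C_k|=|\C_{k_1}|^{s}$, which is the claimed size.

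Next I would verify the two conditions of Definition \ref{dcans} (equivalently, \eqref{e6}). Fix three distinct points $a^{1},a^{2},a^{3}\in\C_k$ and write $a^{i}_{(j)}:=a^{i}|_{B_j}\in\C_{k_1}$. Because $a^{2}\neq a^{3}$ while both agree on $R$, there is a block index $j_0$ with $a^{2}_{(j_0)}\neq a^{3}_{(j_0)}$. Since meet and join on $D_l^{k}$ are computed coordinatewise, it suffices to prove $a^{1}_{(j_0)}\wedge a^{2}_{(j_0)}\neq a^{1}_{(j_0)}\wedge a^{3}_{(j_0)}$ and $a^{1}_{(j_0)}\vee a^{2}_{(j_0)}\neq a^{1}_{(j_0)}\vee a^{3}_{(j_0)}$ \emph{inside} $D_l^{k_1}$; then the corresponding coordinates of the global meets (resp. joins) already differ. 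Now there are exactly two sub-cases. If $a^{1}_{(j_0)},a^{2}_{(j_0)},a^{3}_{(j_0)}$ are three distinct elements of $\C_{k_1}$, both inequalities follow at once from the strong cancellativity of $\C_{k_1}$. Otherwise, since $a^{2}_{(j_0)}\neq a^{3}_{(j_0)}$, the element $a^{1}_{(j_0)}$ must equal one of them; say $a^{1}_{(j_0)}=a^{2}_{(j_0)}\neq a^{3}_{(j_0)}$ (the other possibility is symmetric). Then $a^{1}_{(j_0)}\wedge a^{2}_{(j_0)}=a^{1}_{(j_0)}$, whereas $a^{1}_{(j_0)}\wedge a^{3}_{(j_0)}\neq a^{1}_{(j_0)}$ because $a^{1}_{(j_0)}$ and $a^{3}_{(j_0)}$ are distinct and incomparable in $\C_{k_1}$; and likewise $a^{1}_{(j_0)}\vee a^{2}_{(j_0)}=a^{1}_{(j_0)}\neq a^{1}_{(j_0)}\vee a^{3}_{(j_0)}$. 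This gives both required inequalities, so $\C_k$ is strongly cancellative.

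The one genuinely delicate point — really the crux of the argument — is this last sub-case, where exactly two of the three block-restrictions coincide: it is precisely here that the hypothesis ``any two elements of $\C_{k_1}$ are incomparable'' is used, since it is what prevents $a^{1}_{(j_0)}\wedge a^{3}_{(j_0)}$ (resp.\ $a^{1}_{(j_0)}\vee a^{3}_{(j_0)}$) from collapsing back to $a^{1}_{(j_0)}$. Everything else is routine bookkeeping about the coordinatewise meet and join on $D_l^{k}$. As an immediate application, taking $k_1=2$ and $\C_{k_1}=\{(x,y):x+y=l-1\}$ — the size-$l$ antichain exhibited after Proposition \ref{ptbcans} — yields a strongly cancellative set of size $l^{\lfloor k/2\rfloor}$ on $D_l^{k}$, matching the lower bound quoted in the introduction.
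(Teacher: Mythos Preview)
Your proof is correct and follows essentially the same approach as the paper: the same block-product construction with the remainder block set to zero, and the same verification by restricting to a block $j_0$ where $a^{2}$ and $a^{3}$ differ and then splitting into the case where all three restrictions are distinct (handled by strong cancellativity of $\C_{k_1}$) versus the case where $a^{1}_{(j_0)}$ coincides with one of the other two (handled by the incomparability hypothesis). If anything, your treatment of the second sub-case is more explicit than the paper's, which argues by contradiction from $v\vee v'=v\vee v''$ and compresses the comparability conclusion into a single sentence.
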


\begin{proof}
Every point in $D_l^{k}$ is a $k$-dimensional vector $(v_1,\ldots,v_k)$, where $0\leq v_i\leq l-1$ for $1\leq i \leq k$. For every vector $v = (v_1,\ldots,v_k)$, we define subvectors induced by $v$ as $B_j(v) = (v_{(j-1)k_1+1},\ldots,v_{jk_1})$, for $1\leq j\leq s$, and $B_{s+1}(v) = (v_{k_1s+1},\ldots,v_{k})$. Let $\C_k$ to be the set of all $k$-dimensional vectors $v$ such that $B_j(v) \in \C_{k_1}$ for all $1\leq j \leq s$, and $B_{s+1}(v)$ is the zero vector. Clearly, we have $|\C_k| = \left|\C_{k_1}\right|^s$.

Suppose there are three different elements $v,v',v'' \in \C_k$ such that $v \vee v' = v\vee v''$. Since $v'$ and $v''$ are different, we have $B_{j^*}(v') \neq B_{j^*}(v'')$ for some $1\leq j^* \leq s$. On the other hand, we know $B_{j^*}(v) \vee B_{j^*}(v') = B_{j^*}(v) \vee B_{j^*}(v'')$ which implies that one of $B_{j^*}(v')$ or $B_{j^*}(v'')$ is equal to $B_{j^*}(v)$. Therefore, $v'_i \preceq v''_i$ or $v''_i \preceq v'_i$, and this contradicts the assumption that any two different elements in $\C_{k_1}$ are incomparable. Similarly, it is easy to see that $v \wedge v' \neq v\wedge v''$. As a result, $\C_k$ is a strongly cancellative set of size $\left|\C_{k_1}\right|^s$.
\end{proof}

We can use this result to give a construction of a strongly cancellative set on $D_{l}^{k}$.

\begin{corollary}\label{tcandnc}
There exists a strongly cancellative set $\C_{D_{l}^{k}}$ on $D_{l}^{k}$, such that $\left|\C_{D_{l}^{k}}\right| = l^{\lfloor \frac{k}{2} \rfloor}$.
\end{corollary}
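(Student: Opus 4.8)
The plan is to derive this as an immediate consequence of Proposition \ref{pconcans} by choosing the ``small'' case $k_1 = 2$ and exhibiting a suitable strongly cancellative set on $D_l^2$ whose elements are pairwise incomparable. Concretely, I would first recall from the discussion following Proposition \ref{ptbcans} that the set $\C_{D_{l,l}} = \{(x,y) \mid x+y = l-1\}$ is a strongly cancellative set on $D_l^2 = D_{l,l}$ of size $\min(l,l) = l$. The key additional observation needed is that any two distinct elements $(x,y)$ and $(x',y')$ of this set are incomparable in $D_l^2$: if $x < x'$ then, since $x+y = x'+y' = l-1$, we must have $y > y'$, so neither dominates the other coordinatewise. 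Thus $\C_{D_{l,l}}$ meets the hypothesis of Proposition \ref{pconcans}.

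Next I would apply Proposition \ref{pconcans} with $k_1 = 2$ and $\C_{k_1} = \C_{D_{l,l}}$. For a given $k$, set $s = \lfloor k/2 \rfloor$. The proposition then produces a strongly cancellative set $\C_k$ on $D_l^k$ of size $|\C_{k_1}|^s = l^{\lfloor k/2 \rfloor}$. Taking $\C_{D_l^k} := \C_k$ gives exactly the claimed bound, completing the proof.

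There is essentially no hard step here: the corollary is a direct specialization. The only point requiring a line of verification is the incomparability of the elements of $\C_{D_{l,l}}$, which follows from the defining equation $x+y = l-1$ as noted above. One should also double-check the trivial boundary behavior when $k$ is odd, but this is already handled inside Proposition \ref{pconcans} by the ``leftover'' block $B_{s+1}$ being forced to the zero vector, so no separate argument is needed. I would write the proof as a short paragraph invoking these two facts.

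\begin{proof}
By the remark following Proposition \ref{ptbcans}, the set $\C_{D_{l,l}} = \{(x,y) \mid x+y = l-1\}$ is a strongly cancellative set on $D_l^2$ of size $l$. Moreover, any two distinct elements $(x,y),(x',y') \in \C_{D_{l,l}}$ are incomparable: if $x < x'$, then $y = l-1-x > l-1-x' = y'$, so neither vector dominates the other coordinatewise. Hence $\C_{D_{l,l}}$ satisfies the hypothesis of Proposition \ref{pconcans} with $k_1 = 2$. Applying that proposition with $s = \lfloor \frac{k}{2} \rfloor$ yields a strongly cancellative set $\C_k$ on $D_l^k$ with $|\C_k| = |\C_{D_{l,l}}|^s = l^{\lfloor \frac{k}{2} \rfloor}$. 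Taking $\C_{D_l^k} = \C_k$ completes the proof.
\end{proof}
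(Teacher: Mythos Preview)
Your proof is correct and follows exactly the same approach as the paper: invoke the strongly cancellative set $\{(x,y)\mid x+y=l-1\}$ on $D_l^2$, note its elements are pairwise incomparable, and apply Proposition~\ref{pconcans} with $k_1=2$. Your version is in fact slightly more explicit, since you spell out the incomparability check that the paper merely asserts.
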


\begin{proof}
We have seen that $\C_{D_{l}^{2}} = \{ (x,y) \mid x+y = l-1 \}$ is a strongly cancellative set of size size $l$ on $D_{l}^{2}$ such that any two elements in the set are incomparable. By Proposition \ref{pconcans}, there exists a strongly cancellative set $\C_{D_{l}^{k}}$ of size $l^{\lfloor \frac{k}{2} \rfloor}$.
\end{proof}

Finally, we show an upper bound for the size of strongly cancellative sets on $D_{l}^{k}$.

\begin{theorem}\label{tcandnie}
Let $\C_{D_{l}^{k}}$ be a strongly cancellative set on $D_{l}^{k}$, then 
$$\left|\C_{D_{l}^{k}}\right| \leq (2 l)^{\frac{k}{2}} + \frac{k(l-1)}{2} +1.$$
\end{theorem}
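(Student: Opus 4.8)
The plan is to combine a short structural observation about strongly cancellative sets with the information-theoretic argument used in the proof of Theorem~\ref{tinrs}.

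First I would record that a strongly cancellative set contains no chain of three points: if $v_1\prec v_2\prec v_3$ all lay in $\C_{D_l^k}$, then $v_1\wedge v_2=v_1=v_1\wedge v_3$, contradicting \eqref{e6}. Hence the comparability relation on $\C_{D_l^k}$ is a matching (each point is comparable to at most one other), and $\C_{D_l^k}$ splits as a disjoint union $A_1\sqcup A_2$, where $A_1$ is the set of minimal elements of $\C_{D_l^k}$ --- a strongly cancellative \emph{antichain} --- and $A_2$ consists of the remaining points, one for each comparable pair. Thus $|\C_{D_l^k}|=|A_1|+|A_2|$, and it is enough to show $|A_1|\le (2l)^{k/2}$ and $|A_2|\le \tfrac{k(l-1)}{2}+1$.

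To bound $|A_1|$ I would run the argument of Theorem~\ref{tinrs} on $A_1$. Let $\xi=a_i\wedge a_j$ and $\xi'=a_i\vee a_j$ with $a_i,a_j$ independent and uniform on $A_1$, and write $\xi=(\xi_1,\dots,\xi_k)$ with $\xi_t\in\{0,\dots,l-1\}$ equal to the $t$-th coordinate of the meet. Since $A_1$ is an antichain, for each fixed $a_i$ the map $a_j\mapsto a_i\wedge a_j$ is injective by \eqref{e6}; the crucial step is to upgrade this to the fibre bound $\#\{(a_i,a_j)\in A_1^2:a_i\wedge a_j=a\}\le 2^{k}$ for every value $a$, which gives $\mathcal H(\xi)\ge\log\bigl(|A_1|^2/2^{k}\bigr)$, and symmetrically $\mathcal H(\xi')\ge\log\bigl(|A_1|^2/2^{k}\bigr)$. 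By Theorem~\ref{eentropyin}, $\mathcal H(\xi)\le\sum_{t=1}^{k}\mathcal H(\xi_t)\le k\log l$ (and likewise for $\xi'$), so $|A_1|^2\le 2^{k}l^{k}=(2l)^{k}$, i.e. $|A_1|\le (2l)^{k/2}$.

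For $|A_2|$, use that $D_l^k$ is graded with rank $\mathrm{rank}(v)=\sum_i v_i$ ranging over $\{0,1,\dots,k(l-1)\}$. Writing the comparable pairs as $p_1\prec p_1',\dots,p_m\prec p_m'$ with $m=|A_2|$, the matching property forces each $p_i$ to be incomparable to all $p_j,p_j'$ with $j\ne i$, so that $p_i$ lies in the down-set of $p_i'$ but in no other down-set $\downarrow p_j'$; combining this with strong cancellativity of the (antichain) sets $\{p_1,\dots,p_m\}$ and $\{p_1',\dots,p_m'\}$ and the rank inequalities $\mathrm{rank}(p_i)<\mathrm{rank}(p_i')$, I would argue that the pairs must occupy essentially distinct rank-pairs and hence $m\le\tfrac{k(l-1)}{2}+1$. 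Adding the two bounds gives $|\C_{D_l^k}|\le (2l)^{k/2}+\tfrac{k(l-1)}{2}+1$.

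The main obstacle is the fibre bound $\#\{(a_i,a_j):a_i\wedge a_j=a\}\le 2^{k}$. Strong cancellativity controls only triples, so \emph{a priori} many pairwise-disjoint pairs can share the same meet $a$; to control this one should study the ``touch sets'' $T(v)=\{t:v_t=a_t\}$ of the points $v\in A_1$ with $v\succeq a$. A pair $\{a_i,a_j\}$ contributes to the fibre exactly when $T(a_i)\cup T(a_j)=\{1,\dots,k\}$, and \eqref{e6} forces these contributing pairs to form a matching on $A_1$; the task is to show this matching has at most $2^{k-1}$ edges (equivalently, that one cannot fit too many such ``covering'' pairs with distinct touch-set patterns). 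Getting the constant here exactly right --- and nailing down the rank-packing estimate for $|A_2|$ --- is where the real work lies; everything else is the template already established in Sections 2 and 3.
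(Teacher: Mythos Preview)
Your decomposition $\C_{D_l^k}=A_1\sqcup A_2$ is structurally sound (strong cancellativity does force the comparability graph to be a matching, though you should also rule out the configurations $b>a<c$ and $b<a>c$, not just $3$-chains). But the two bounds you then need are both genuine gaps, as you yourself concede.

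The fibre bound $\#\{(a_i,a_j):a_i\wedge a_j=a\}\le 2^{k}$ is the heart of your argument, and the touch-set matching you describe does not obviously have at most $2^{k-1}$ edges: there is no a~priori limit on how many elements of $A_1$ lie above a given $a$, and strong cancellativity only forces the fibre pairs to form a matching on that set --- it does not bound the size of the set. Likewise, the ``rank-packing'' estimate $|A_2|\le \tfrac{k(l-1)}{2}+1$ has no argument behind it; nothing in \eqref{e6} restricts comparable pairs to distinct rank levels, and there is no reason the additive term in the theorem should correspond to a separate structural piece of $\C_{D_l^k}$.

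The paper sidesteps both problems by choosing the random variable differently. Instead of $\xi=a_i\wedge a_j$ with \emph{both} points random, it fixes a single $v\in\C_{D_l^k}$ and sets $\xi_v=v\wedge v^{*}$ with $v^{*}$ uniform on $\C_{D_l^k}\setminus\{v\}$. Condition \eqref{e6} now says directly that $v^{*}\mapsto v\wedge v^{*}$ is injective, so every value of $\xi_v$ has probability exactly $1/N$ with $N=|\C_{D_l^k}|-1$, and $\mathcal H(\xi_v)=\log N$ on the nose --- no fibre bound is needed. One applies Theorem~\ref{eentropyin} coordinatewise, then \emph{averages over the choice of $v$}; the same is done with $\xi'_v=v\vee v^{*}$, the two inequalities are added, and the concavity bound \eqref{ecommonb} collapses the sum to yield
\[
2\log N \;\le\; k + k\Bigl(1+\tfrac{1}{N}\Bigr)\log l .
\]
The additive correction $\tfrac{k(l-1)}{2}+1$ in the statement does not come from a separate combinatorial piece like your $A_2$; it drops out of solving this implicit inequality in $N$.
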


\begin{proof}
Any element $v$ on the lattice $D_{l}^{k}$ is a $k$-dimensional vector $v = (v_1,\ldots,v_k)$ such that $0\leq v_i \leq l-1$ for all $1 \leq i \leq k$. We first define $\C_m(t)$ and $\Pb_m(t)$.

\begin{enumerate}
	\item We define $\C_m(t)$ to be set of vectors whose $m$-th compoenent is $t$, for any $1 \leq t \leq k$. That is, $\C_m(t) = \{ v \mid v \in \C_{D_{l}^{k}}, \text{ } v_m = t  \}.$
	\item Let $v$ be a random element uniformly chosen in the set $\C_{D_{l}^{k}}$. We denote the probability that the $m$-th component $v_m$ of $v$ is $t$ by $\Pb_m(t)$. So, 
$$\Pb_m(t) = \frac{|\C_m(t)|}{\left|\C_{D_{l}^{k}}\right|}.$$
\end{enumerate}

Fix an arbitrary element $v \in \C_{D_{l}^{k}}$. We define the random variable $\xi_v = v \wedge v^*$, where $v^*$ is the random element uniformly chosen in  $\C_{D_{l}^{k}}\backslash \{ v \}$. Suppose that there exists two element $v_1$ and $v_2$ in $\C_{D_{l}^{k}}$ so that obtain the same value in $\xi_v$. That is, $v \wedge v_1 = v \wedge v_2$ which is not possible in strongly cancellative sets. Consequently, every value in $\xi_v$ appears exactly once. Since there are totally $\left|\C_{D_{l}^{k}}\right|-1$ different values for $\xi_v$, the entropy function of $\xi_v$ is
\begin{equation}\label{e4}
 \mathcal{H}(\xi_v) = \log{\left( \left|\C_{D_{l}^{k}}\right| -1  \right)}.
\end{equation}
For convenience, we set $N = \left|\C_{D_{l}^{k}}\right| -1$.

On the other hand, every value in $\xi_v$ is a $k$-dimensional vector $(\xi_v(1),\ldots,\xi_v(k))$ such that $\xi_v(m) = \min(v_m,v^{*}_m)$ for any $1\leq m\leq k$ and randomly chosen element $v^{*}$. Consequently, for any $1\leq m\leq k$, $\xi_v(m)$ takes all its values in $\{0,1,\ldots,v_m  \}$. We denote the probability that $\xi_v(m) = t'$ by $\Pb_{\xi_v(m)}(t')$. Moreover, if $0 \leq t' \leq v_m -1$, we should have $t'=\min(v_m,v^{*}_m) < v_m$ and thus, $v^{*}_m = t'$. If $t' = v_m$, we must have  $\min(v_m,v^{*}_m) = t' = v_m$ which implies that $v_m\leq v^{*}_m$.

Therefore, we obtain the following properties for $P_{\xi_v(m)}(t')$:
\begin{equation}
\Pb_{\xi_v(m)}(t') = 
\begin{cases} 
 \hfill \frac{|\C_m(t')|}{N},  & 0 \leq t' \leq v_m -1. \\
 \hfill \frac{\sum_{t'_1 = v_m}^{l-1} |\C_m(t')| -1}{N}, & t' = v_m. \\
 \hfill 0, & v_m+1 \leq t' \leq l-1.
\end{cases}
\end{equation}
The entropy function of $\xi_v(m)$ can be computed as follows:
\begin{align*}
\mathcal{H}\left(\xi_v(m)\right)
 & = \mathcal{H}\left(\Pb_{\xi_v(m)}(0),\ldots,\Pb_{\xi_v(m)}(v_m-1),\Pb_{\xi_v(m)}(v_m)\right) \\
 & = \sum_{t' = 0}^{v_m}\Pb_{\xi_v(m)}(t')\log{\frac{1}{\Pb_{\xi_v(m)}(t')}}.
\end{align*}
Furthermore, by Eq.\eqref{e4} and Theorem \eqref{eentropyin}, we have 
\begin{equation}
\log{N} \leq \sum_{m=1}^{k}\mathcal{H}\left(\xi_v(m)\right)
  = \sum_{m=1}^{k}\sum_{t' = 0}^{v_m}\Pb_{\xi_v(m)}(t')\log{\frac{1}{\Pb_{\xi_v(m)}(t')}}.
\end{equation}

Since the above equation holds for every element $v$ in the set $\C_{D_{l}^{k}}$. If we take the average over all the elements in the set $\C_{D_{l}^{k}}$, we obtain
\begin{equation}\label{e5}
\log{N} \leq \frac{\sum_{v \in \C_{D_{l}^{k}}}\sum_{m=1}^{k}\mathcal{H}\left(\xi_v(m)\right)}{N+1} =  \frac{\sum_{m=1}^{k}\sum_{v \in \C_{D_{l}^{k}}}\mathcal{H}\left(\xi_v(m)\right)}{N+1}.
\end{equation}
Moreover, from (2), we know that the probability that $v_m = t$ for some  $0 \leq t \leq l-1$ is $\Pb_m(t) = \frac{|\C_m(t)|}{N+1}$, and therefore, \eqref{e5} can be rewritten as follows:
\begin{equation}\label{ecandis1}
\log{N} \leq \sum_{m=1}^{k}\sum_{t = 0}^{l-1}\Pb_m(t)\left(\sum_{t' = 0}^{t}P_{\xi_v(m)}(t')\log{\frac{1}{\Pb_{\xi_v(m)}(t')}}\right).
\end{equation}

Now, we consider the random variable $\xi_v^{'} = v \vee v^*$, where $v^*$ is also independently chosen under the uniform distribution on $\C_{D_{l}^{k}}\backslash \{ v \}$. Thus, we have the following:
\begin{equation}\label{casescans}
\Pb_{\xi'_v(m)}(t') = 
\begin{cases} 
 \hfill 0,  & 0 \leq t' \leq v_m -1. \\
 \hfill \frac{\sum_{t'_1 = 0}^{v_m} |\C_m(t')| -1}{N}, & t' = v_m. \\
 \hfill \frac{|\C_m(t')|}{N}, & v_m+1 \leq t' \leq l-1.
\end{cases}
\end{equation}
By similar arguments, Eq.\eqref{casescans} implies that:
\begin{equation}\label{ecandis2}
\log{N} \leq \sum_{m=1}^{k}\sum_{t = 0}^{l-1}\Pb_m(t)\left(\sum_{t' = t}^{l-1}\Pb_{\xi_v(m)}(t')\log{\frac{1}{\Pb_{\xi_v(m)}(t')}}\right).
\end{equation}
For convenience, we set $\Pb_{m}(t')= \frac{|\C_m(t')|}{N}$, $q _m(t) = \frac{\sum_{t'_1 = t}^{l-1} |\C_m(t')| -1}{N}$, and $q '_m(t) = \frac{\sum_{t'_1 = 0}^{t} |\C_m(t')| -1}{N}$. Consider the following inequality,
\begin{align}
& \sum_{t' = 0}^{t}P_{\xi_v(m)}(t')\log{\frac{1}{\Pb_{\xi_v(m)}(t')}} + \sum_{t' = t}^{l-1}\Pb_{\xi_v(m)}(t')\log{\frac{1}{\Pb_{\xi_v(m)}(t')}}\\
\leq & \left(\sum_{t'=0}^{l-1}\Pb '_m(t')\log{\frac{1}{\Pb '_m(t')}}\right)+ q _m(t)\log{\frac{1}{q_m(t)}} + q '_m(t)\log{\frac{1}{q'_m(t)}} \\
\leq & \left(\frac{N+1}{N} \right) \log{\frac{l   N}{N+1}} +\left(q_m(t)+q'_m(t)\right) \cdotp \log{\left(\frac{2}{q_m(t)+q'_m(t)}\right)}.
\end{align}
Note that (5.9) holds because $p\log{\frac{1}{p}} >0$, when $0<p<1$, and (5.10) holds by the inequality in \eqref{ecommonb}.

\vspace{5pt}
Finally, by adding \eqref{ecandis1} and \eqref{ecandis2}, the above result implies that
\begin{align*}
2\log{N}  &\leq \sum_{m=1}^{k}\sum_{t = 0}^{l-1} \Pb_m(t) \left[ \left(q_m(t)+q'_m(t)\right) \cdotp \log{\left(\frac{2}{q_m(t)+q'_m(t)}\right)}+ \left(  1+\frac{1}{N} \right)  \log{l} \right]  \\
&= k \left(  1+\frac{1}{N} \right)  \log{l} +\sum_{m=1}^{k}\sum_{t = 0}^{l-1} \Pb_m(t)\cdotp \left(q_m(t)+q'_m(t)\right) \cdotp \log{\left(\frac{2}{q_m(t)+q'_m(t)}\right)}   \\
&\leq k + k \left(  1+\frac{1}{N} \right)  \log{l}.
\end{align*}
The last inequality is due to the fact that function $x\log{\frac{2}{x}}$ is decreasing with $x \geq 1$ and that $q_m(t) + q'_m(t) = 1+ \frac{|\C_m(t)|-1}{N} \geq 1$ when $\Pb_m(t) = \frac{|\C_m(t)|}{N+1}\neq 0$. 
\vspace{5pt}

Therefore, we have
\begin{equation}\label{e2}
N \leq 2^{\frac{k}{2}} l^{\frac{k}{2}\left(1+\frac{1}{N}\right)}.
\end{equation}
Consider the function $f(N) = N -  2^{\frac{k}{2}} l^{\frac{k}{2}\left(1+\frac{1}{N}\right)}$. The inequality \eqref{e2} implies that $f(N) \leq 0$ and is increasing with $N$. If we set $N_1 =   (2 l)^{\frac{k}{2}} + \frac{k(l-1)}{2}$, then it is easy to see that
\begin{align}\label{e3}
f(N_1) &= \frac{k(l-1)}{2} +  (2 l)^{\frac{k}{2}}\cdotp \left(1-(1+l-1)^{\frac{k}{2N_1}}\right)\\
 &\geq \frac{k(l-1)}{2} + (2 l)^{\frac{k}{2}}\cdotp \left(1-\left(1+ \frac{(l-1)k}{2N_1}\right)\right) \\
& =  \frac{k(l-1)}{2} - \frac{(2 l)^{\frac{k}{2}}}{N_1} \cdotp \frac{k(l-1)}{2} \geq 0,
\end{align}
where (5.12) implies (5.13) because $(1+a)^b \leq 1+ab$ when $b \leq 1$ and $a\geq 0$.
\vspace{5pt}

As a result, since $f(N) \leq 0 \leq f(N_1)$,
$$\left|\C_{D_{l}^{k}}\right| -1 = N \leq N_1 = (2 l)^{\frac{k}{2}} + \frac{k(l-1)}{2}.$$
\end{proof}

\bigskip


\begin{thebibliography}{aa}

\bibitem{RG} {\sc R. Ahlswede, G. Simonyi:}
On the optimal structure of recovering set pairs in lattices: the sandglass conjecture, Discr. Math., 128(1994), 389-394

\bibitem{B} {\sc James B. Shearer:}
A New Construction For Cancellative Families of Sets, The Electronic Jounal of Combinatorics 3 (1996).

\bibitem{PZ} {\sc P. Frankl, Z. Furedi,}
Union-free Hypergraphs and Probability Theory, European Journal of Combinatorics, \textbf{5} (1984), pp. 127-131.

\bibitem{RJ} {\sc R. Holzman, J. K\"orner:}
Cancellative Pairs of Families of Sets, Europ. J. Combinatorics (1995) \textbf{16}, 263-266.

\bibitem{AG} {\sc A. Sali, G. Simonyi:}
Recovering Set System and Sandglass Conjecture, Combinatorics, Probability, and Computing (1997) \textbf{6}, 481-491

\bibitem{M} {\sc Michael L. Fredman:}
The Complexity of Maintaining an Array and Computing Its Partial Sums, Journal of the Association for Computing Machinery, Vol 29, No I, January 1982, pp 250-260

\bibitem{JA} {\sc Janos Korner, Alon Orlitsky:}
Zero-Error Information Theory, IEEE TRANSACTIONS ON INFORMATION THEORY, VOL. 44, NO. 6, OCTOBER 1998.

\bibitem{DJ} {\sc Don Coppersmith, James B. Shearer:}
New Bounds for Union-free Families of Sets, the electronic journal of combinatorics 5 (1998), R39

\bibitem{CJ} {\sc I. Csisz\'r, J. K\"orner}
Information Theory: Coding Theorems for Discrete Memoryless Systems, Academic Press, New York, 1982.

\bibitem{S1} {\sc R. Stanley:}
\textit{Enumerative Combinatorics}, vol. 1, Cambridge University Press, New York/Cambridge, 1999.
\end{thebibliography}
\end{document}